\newtheorem{theorem}{Theorem}
\theoremstyle{plain}
\newtheorem{definition}{Definition}
\newtheorem{example}{Example}
\newtheorem{lemma}{Lemma}
\newtheorem{proposition}{Proposition}
\newtheorem{remark}{Remark}
\numberwithin{equation}{section}
\begin{document}
\title[A note on multiple summing operators and applications]{A note on
multiple summing operators and applications}
\author[Albuquerque]{N. Albuquerque}
\address[N. Albuquerque]{Departamento de Matem\'{a}tica\\
\indent Universidade Federal da Para\'{\i}ba\\
\indent 58.051-900 - Jo\~{a}o Pessoa, Brazil.}
\email{ngalbqrq@gmail.com}
\author[Ara\'ujo]{G. Ara\'ujo}
\address[G. Ara\'ujo]{Departamento de An\'{a}lisis Matem\'{a}tico\\
\indent Facultad de Ciencias Matem\'{a}ticas\\
\indent Plaza de Ciencias 3\\
\indent Universidad Complutense de Madrid\\
\indent Madrid, 28040, Spain}
\email{gdasaraujo@gmail.com}
\author[Pellegrino]{D. Pellegrino}
\address[D. Pellegrino]{Departamento de Matem\'{a}tic,a\\
\indent Universidade Federal da Para\'{\i}ba\\
\indent 58.051-900 - Jo\~{a}o Pessoa, Brazil.}
\email{dmpellegrino@gmail.com and pellegrino@pq.cnpq.br}
\author[Rueda]{P. Rueda}
\address[P. Rueda]{Departamento de An\'{a}lisis Matem\'{a}tico\\
\indent Universidad de Valencia\\
\indent 46100 Burjassot, Valencia.}
\email{pilar.rueda@uv.es}
\thanks{N. Albuquerque, G. Ara\'{u}jo and D. Pellegrino are supported by
CNPq Grant 401735/2013-3 - PVE - Linha 2}
\thanks{P. Rueda is supported by Ministerio de Econom\'{\i}a y
Competitividad (Spain) MTM2011-22417.}
\subjclass[2010]{Primary 46B25, 47H60}
\keywords{multiple summing operators, absolutely summing operators,
Bohnenblust--Hille inequality}

\begin{abstract}
We prove a new result on multiple summing operators and among other results
applications, we provide a new extension of Littlewood's $4/3$ inequality to
$m$-linear forms.
\end{abstract}

\maketitle



\section{Introduction}

Let $\mathbb{K}$ be the real scalar field $\mathbb{R}$ or the complex scalar
field $\mathbb{C}$. As usual, for a positive integer $N$ we define $%
\ell_\infty^N=\{(x_n)_{n=1}^\infty\subset \mathbb{K} \mbox{ bounded }\}$, $%
c_{0}=\left\{ \left( x_{n}\right)_{n=1}^\infty \subset \mathbb{K}:\lim
x_{n}=0\right\} $ and $e_{j}$ represents the canonical vector of $c_{0}$
with $1$ in the $j$-th coordinate and $0$ elsewhere. Littlewood's $4/3$
inequality \cite{Litt}, proved in 1930, asserts that
\begin{equation*}
\left( \sum\limits_{i,j=1}^{\infty }\left\vert U(e_{i},e_{j})\right\vert ^{%
\frac{4}{3}}\right) ^{\frac{3}{4}}\leq \sqrt{2}\left\Vert U\right\Vert
\end{equation*}%
for every continuous bilinear form $U:c_{0}\times c_{0}\rightarrow \mathbb{K}
$ or, equivalently,
\begin{equation*}
\left( \sum\limits_{i,j=1}^{N}\left\vert U(e_{i},e_{j})\right\vert ^{\frac{4%
}{3}}\right) ^{\frac{3}{4}}\leq \sqrt{2}\left\Vert U\right\Vert
\end{equation*}%
for every positive integer $N$ and all bilinear forms $U:\ell _{\infty
}^{N}\times \ell _{\infty }^{N}\rightarrow \mathbb{K}$.

It is well known that the exponent $4/3$ is optimal and it was recently
shown in \cite{diniz2} that the constant $\sqrt{2}$ is also optimal for real
scalars. For complex scalars, the constant $\sqrt{2}$ can be improved to $2/%
\sqrt{\pi }$, although it seems to be not known if this value is optimal.
The natural step further is to investigate sums
\begin{equation*}
\left( \sum\limits_{i_{1},\ldots ,i_{m}=1}^{N}\left\vert
U(e_{i_{^{1}}},\ldots ,e_{i_{m}})\right\vert ^{r}\right) ^{\frac{1}{r}}
\end{equation*}%
for $m$-linear forms $U:\ell _{\infty }^{N}\times \cdots \times \ell
_{\infty }^{N}\rightarrow \mathbb{K}$. The exponent $4/3$ need to be
increased to have a similar inequality for multilinear forms; this is what
the H.F. Bohnenblust and E. Hille discovered in 1931 (\cite{bh}, and also
\cite{sur}). More precisely, the Bohnenblust--Hille inequality asserts that
for every positive integer $m$ there is a constant $C_{m}\geq 1$ so that
\begin{equation}
\left( \sum\limits_{i_{1},\ldots ,i_{m}=1}^{N}\left\vert
U(e_{i_{^{1}}},\ldots ,e_{i_{m}})\right\vert ^{\frac{2m}{m+1}}\right) ^{%
\frac{m+1}{2m}}\leq C_{m}\left\Vert U\right\Vert  \label{lf}
\end{equation}%
for all positive integers $N$ and all $m$-linear forms $U:\ell _{\infty
}^{N}\times \cdots \times \ell _{\infty }^{N}\rightarrow \mathbb{K}$;
moreover, the exponent $2m/\left( m+1\right) $ is sharp$.$ Another natural
question is:

\bigskip

\textit{Is it possible to obtain multilinear versions of Littlewood's }$4/3$%
\textit{\ inequality keeping the exponent }$4/3?$

\bigskip

This problem was treated at least in two recent papers (we state the results
for complex scalars but the case of real scalars is similar, with slightly
different constants):

\begin{itemize}
\item (\cite{uni}) For all positive integers $N$ and all $m$-linear forms $%
U:\ell _{\infty }^{N}\times \cdots \times \ell _{\infty }^{N}\rightarrow
\mathbb{C}$ we have%
\begin{equation*}
\left( \sum\limits_{i,j=1}^{N}\left\vert
U(e_{i},...,e_{i},e_{j},...,e_{j})\right\vert ^{\frac{4}{3}}\right) ^{\frac{3%
}{4}}\leq \frac{2}{\sqrt{\pi }}\left\Vert U\right\Vert \text{.}
\end{equation*}

\item (\cite{arapell}) For all positive integers $N$ and all $m$-linear
forms $U:\ell _{\infty }^{N}\times \cdots \times \ell _{\infty
}^{N}\rightarrow \mathbb{C}$ we have
\begin{equation*}
\left( \sum\limits_{i_{1},\ldots ,i_{m}=1}^{N}\left\vert
U(e_{i_{^{1}}},\ldots ,e_{i_{m}})\right\vert ^{\frac{4}{3}}\right) ^{\frac{3%
}{4}}\leq \left( \prod\limits_{j=2}^{m}\Gamma \left( 2-\frac{1}{j}\right) ^{%
\frac{j}{2-2j}}\right) N^{\frac{m-2}{4}}\left\Vert U\right\Vert
\end{equation*}%
and the exponent $\frac{m-2}{4}$ is optimal.
\end{itemize}

In this paper we investigate this problem from a different point of view.
More precisely, as a consequence of our main result we show that for all
positive integers $m\geq 3$ and bijections $\sigma _{1},\ldots ,\sigma
_{m-2} $ from $\mathbb{N}\times \mathbb{N}$ to $\mathbb{N}$ we have%
\begin{equation*}
\left( \sum_{i,j=1}^{\infty }\left\vert U\left( e_{i},e_{j},e_{\sigma
_{1}(i,j)},\ldots ,e_{\sigma _{m-2}(i,j)}\right) \right\vert ^{\frac{4}{3}%
}\right) ^{\frac{3}{4}}\leq \sqrt{2}\Vert U\Vert
\end{equation*}%
for every continuous $m$-linear form $U:c_{0}\times \cdots \times
c_{0}\rightarrow \mathbb{K}$.

We prefer to begin with the theory of multiple summing operators and state
our main result in this context; then the above result (among others) will
be just simple consequences of the main result.

%

\section{Multiple summing operators}

Let $E,E_{1},...,E_{m}$ and $F$ denote Banach spaces over $\mathbb{K}$ and
let $B_{E^{\ast }}$ denote the closed unit ball of the topological dual of $%
E $. If $1\leq q\leq \infty $, by $q^{\ast }$ we represent the conjugate of $%
q$. For $p\geq 1$, by $\ell _{p}(E)$ we mean the space of absolutely $p$%
--summable sequences in $E$; also $\ell _{p}^{w}(E)$ denotes the linear
space of the sequences $\left( x_{j}\right) _{j=1}^{\infty }$ in $E$ such
that $\left( \varphi \left( x_{j}\right) \right) _{j=1}^{\infty }\in \ell
_{p}$ for every continuous linear functional $\varphi :E\rightarrow \mathbb{K%
}$. The function
\begin{equation*}
\left\Vert \left( x_{j}\right) _{j=1}^{\infty }\right\Vert
_{w,p}=\sup_{\varphi \in B_{E^{\ast }}}\left\Vert \left( \varphi \left(
x_{j}\right) \right) _{j=1}^{\infty }\right\Vert _{p}
\end{equation*}%
defines a norm on $\ell _{p}^{w}(E)$. The space of all continuous $m$-linear
operators $T:E_{1}\times \cdots \times E_{m}\rightarrow F$, with the $\sup $
norm, is denoted by $\mathcal{L}\left( E_{1},...,E_{m};F\right) $.

The notion of multiple summing operators, introduced independently by Matos
and P\'{e}rez-Garc\'{\i}a (\cite{matos, per}), is a natural extension of the
classical notion of absolutely summing linear operators (see \cite{Di}). But
multiple summing operators is certainly one of the most fruitful approaches
(see \cite{popa2, popa, popa3} for recent papers). For different approaches
we mention, for instance \cite{b, dimant, comparing,PeRuSa, RuSa}.

\begin{definition}
Let $1\leq q_{1},...,q_{m}\leq p<\infty $. A multilinear operator $T\in
\mathcal{L}\left( E_{1},...,E_{m};F\right) $ is multiple $\left(
p;q_{1},...,q_{m}\right) $--summing if there exists a $C>0$ such that
\begin{equation*}
\left( \sum_{j_{1},...,j_{m}=1}^{\infty }\left\Vert T\left(
x_{j_{1}}^{(1)},...,x_{j_{m}}^{(m)}\right) \right\Vert ^{p}\right) ^{\frac{1%
}{p}}\leq C\prod_{k=1}^{m}\left\Vert \left( x_{j_{k}}^{(k)}\right)
_{j_{k}=1}^{\infty }\right\Vert _{w,q_{k}}
\end{equation*}%
for all $\left( x_{j}^{(k)}\right) _{j=1}^{\infty }\in \ell
_{q_{k}}^{w}\left( E_{k}\right) $, $k\in \{1,...,m\}$. We represent the
class of all multiple $\left( p;q_{1},...,q_{m}\right) $--summing operators
from $E_{1},....,E_{m}$ to $F$ by $\Pi _{\mathrm{mult}\left(
p;q_{1},...,q_{m}\right) }\left( E_{1},...,E_{m};F\right) $ and $\pi _{%
\mathrm{mult}\left( p;q_{1},...,q_{m}\right) }\left( T\right) $ denotes the
infimum over all $C$ as above.
\end{definition}

The main result of this section is the following theorem. Its proof is
inspired in arguments from \cite{Ar,b}.

\begin{theorem}
\label{main} \label{iu}Let $n>m\geq 1$ be positive integers and $%
E_{1},...,E_{n},F$ Banach spaces. If%
\begin{equation}
\Pi _{\mathrm{mult}\left( p;q_{1},...,q_{m}\right) }\left(
E_{1},...,E_{m};F\right) =\mathcal{L}\left( E_{1},...,E_{m};F\right) ,
\label{hyp}
\end{equation}%
then there is a constant $C>0$ (not depending on $n$) such that
\begin{equation*}
\begin{array}{l}
\displaystyle\left( \sum_{i_{1},...,i_{m}=1}^{\infty }\left\Vert
U(x_{i_{1}}^{(1)},...,x_{i_{m}}^{(m)},x_{i_{1}\cdots
i_{m}}^{(m+1)},...,x_{i_{1}\cdots i_{m}}^{(n)})\right\Vert ^{p}\right) ^{%
\frac{1}{p}}\vspace{0.2cm} \\
\textstyle\qquad \leq C\Vert U\Vert \prod_{k=1}^{m}\left\Vert \left(
x_{i}^{(k)}\right) _{i=1}^{\infty }\right\Vert
_{w,q_{k}}\prod_{k=m+1}^{n}\left\Vert \left( x_{i_{1}\cdots
i_{m}}^{(k)}\right) _{i_{1},...,i_{m}=1}^{\infty }\right\Vert _{w,1},%
\end{array}%
\end{equation*}%
for all $n$-linear forms $U:E_{1}\times \cdots \times E_{n}\rightarrow F$.
\end{theorem}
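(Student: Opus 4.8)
The plan is to first normalize the constant and then strip off the weakly summable tail variables. Since hypothesis \eqref{hyp} asserts that $\Pi_{\mathrm{mult}\left(p;q_{1},\dots,q_{m}\right)}(E_{1},\dots,E_{m};F)=\mathcal L(E_{1},\dots,E_{m};F)$ as vector spaces, and the inclusion $\Pi_{\mathrm{mult}}\hookrightarrow\mathcal L$ is norm--decreasing (one has $\|T\|\le\pi_{\mathrm{mult}(p;q_1,\dots,q_m)}(T)$, as seen by testing with single vectors), the open mapping theorem applied to these two Banach spaces yields a constant $C_{0}>0$ with $\pi_{\mathrm{mult}(p;q_1,\dots,q_m)}(T)\le C_{0}\|T\|$ for every $T\in\mathcal L(E_{1},\dots,E_{m};F)$. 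It then suffices to establish the asserted inequality with $C=C_{0}$, which is manifestly independent of $n$.

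Next I would remove the last $n-m$ slots. For each $k\in\{m+1,\dots,n\}$ the family $(x_{i_{1}\cdots i_{m}}^{(k)})_{i_{1},\dots,i_{m}}$ belongs to $\ell_{1}^{w}(E_{k})$, hence defines a bounded operator $S_{k}\colon c_{0}\to E_{k}$ on the copy of $c_{0}$ indexed by the multi-indices, determined by $S_{k}e_{i_{1}\cdots i_{m}}=x_{i_{1}\cdots i_{m}}^{(k)}$ and satisfying $\|S_{k}\|=\|(x_{i_{1}\cdots i_{m}}^{(k)})\|_{w,1}$. Composing $U$ with $S_{m+1},\dots,S_{n}$ in the last slots yields an $n$-linear map $W\colon E_{1}\times\cdots\times E_{m}\times c_{0}\times\cdots\times c_{0}\to F$ with $\|W\|\le\|U\|\prod_{k=m+1}^{n}\|(x_{i_{1}\cdots i_{m}}^{(k)})\|_{w,1}$, and for which $W(x_{i_{1}}^{(1)},\dots,x_{i_{m}}^{(m)},e_{\mathbf i},\dots,e_{\mathbf i})=U(x_{i_{1}}^{(1)},\dots,x_{i_{m}}^{(m)},x_{\mathbf i}^{(m+1)},\dots,x_{\mathbf i}^{(n)})$, where $\mathbf i=(i_{1},\dots,i_{m})$ and $e_{\mathbf i}$ is the canonical vector inserted in each of the last $n-m$ slots. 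This pulls out exactly the factor $\prod_{k>m}\|(x^{(k)})\|_{w,1}$, reducing the theorem to proving $\big(\sum_{\mathbf i}\|W(x_{i_{1}}^{(1)},\dots,x_{i_{m}}^{(m)},e_{\mathbf i},\dots,e_{\mathbf i})\|^{p}\big)^{1/p}\le C_{0}\|W\|\prod_{k=1}^{m}\|(x_{i}^{(k)})\|_{w,q_{k}}$, i.e.\ to the case in which the tail variables are canonical basis vectors slaved to the summation multi-index.

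For this final estimate I would pass to the dual formulation. Denoting by $S$ the sum to be bounded, choose Hahn--Banach functionals $\varphi_{\mathbf i}\in B_{F^{\ast}}$ norming each $W(x_{i_{1}}^{(1)},\dots,e_{\mathbf i},\dots)$ and set $b_{\mathbf i}=\|W(x_{i_{1}}^{(1)},\dots,e_{\mathbf i},\dots)\|^{p-1}\varphi_{\mathbf i}$, so that $\|(b_{\mathbf i})\|_{\ell_{p^{\ast}}(F^{\ast})}\le S^{p-1}$ and $S^{p}=\sum_{\mathbf i}\langle W(x_{i_{1}}^{(1)},\dots,x_{i_{m}}^{(m)},e_{\mathbf i},\dots,e_{\mathbf i}),b_{\mathbf i}\rangle$; the task becomes to bound this pairing by $C_{0}\|W\|\prod_{k\le m}\|(x^{(k)})\|_{w,q_{k}}\,\|(b_{\mathbf i})\|_{p^{\ast}}$ and then divide by $S^{p-1}$. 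Here lies the main obstacle, and the heart of the proof: the tail vector $e_{\mathbf i}$ is slaved to the \emph{entire} multi-index $\mathbf i=(i_{1},\dots,i_{m})$ over which one sums, so one cannot freeze the tail and invoke the $m$-linear multiple-summing inequality for a single fixed operator. Moreover the obvious remedy of decoupling the tail index from the summation index --- replacing $e_{\mathbf i}$ by an independent $e_{\mathbf j}$ and summing over $\mathbf j$ --- is hopelessly lossy and in general divergent, so the diagonal must be preserved exactly. The tension is structural: multiple summing naturally controls sums over $m$ \emph{independent} indices with the sharp exponent $p$, whereas the quantity $S$ lives on the combinatorial diagonal; a random-sign averaging extracts the diagonal cleanly but only at exponent $1$, while Hölder against the sign tensor destroys the $\ell_{p^{\ast}}$ bound. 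Reconciling the sharp summing exponent with the slaved diagonal is exactly the delicate step, and I would carry it out following the mixed-norm/duality machinery of \cite{Ar,b}, exploiting that the tails form a \emph{single} weakly $1$-summable family over the common index $\mathbf i$ (not $n-m$ independent ones) to feed the multiple-$(p;q_{1},\dots,q_{m})$-summing inequality of $(y^{(1)},\dots,y^{(m)})\mapsto W(y^{(1)},\dots,y^{(m)},\cdot)$ against an admissible test family built from $(b_{\mathbf i})$. Everything outside this diagonal-preserving estimate is Hölder's inequality and the closed-graph constant $C_{0}$.
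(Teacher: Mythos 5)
Your first two steps are sound, and one of them is genuinely nice: the open-mapping constant $C_{0}$ is obtained exactly as in the paper, and your reduction of the tail slots via the isometric identification $\ell_{1}^{w}(E_{k})\cong\mathcal{L}(c_{0};E_{k})$ (so that $x_{\mathbf i}^{(k)}=S_{k}e_{\mathbf i}$ and $\Vert W\Vert\leq\Vert U\Vert\prod_{k=m+1}^{n}\Vert(x_{\mathbf i}^{(k)})\Vert_{w,1}$) is correct and would replace the paper's induction on $n$ by a single clean normalization; the duality setup $b_{\mathbf i}=\Vert W(\ldots)\Vert^{p-1}\varphi_{\mathbf i}$ is also fine. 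But the proposal then stops at exactly the point where the theorem lives or dies: the diagonal estimate
\begin{equation*}
\sum_{\mathbf i}\big\langle W\big(x_{i_{1}}^{(1)},\ldots,x_{i_{m}}^{(m)},e_{\mathbf i},\ldots,e_{\mathbf i}\big),b_{\mathbf i}\big\rangle\;\leq\;C_{0}\,\Vert W\Vert\,\prod_{k=1}^{m}\big\Vert\big(x_{i}^{(k)}\big)_{i}\big\Vert_{w,q_{k}}\,\big\Vert(b_{\mathbf i})\big\Vert_{p^{\ast}}
\end{equation*}
is never proved. You name it ``the delicate step'' and defer it wholesale to ``the mixed-norm/duality machinery of \cite{Ar,b}''. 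That is a citation of a toolbox, not an argument, so as written there is a genuine gap precisely at the heart of the proof.

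The gap is made worse by the fact that your structural analysis dismisses the very mechanism that closes it. You claim that random-sign averaging ``extracts the diagonal cleanly but only at exponent $1$'' and that ``H\"older against the sign tensor destroys the $\ell_{p^{\ast}}$ bound''; neither is true once the averaging is combined with the linearization you have already performed, and this combination is the paper's entire proof. Concretely (one tail slot at a time): choose $\alpha_{\mathbf i}\geq0$ with $\sum_{\mathbf i}\alpha_{\mathbf i}^{p^{\ast}}=1$ so that the $\ell_{p}$-sum $S$ equals $\sum_{\mathbf i}\alpha_{\mathbf i}\varphi_{\mathbf i}\big(U(x_{i_{1}}^{(1)},\ldots,x_{i_{m}}^{(m)},x_{\mathbf i}^{(m+1)})\big)$, let $(r_{\mathbf j})$ be Rademacher functions indexed by $\mathbb{N}^{m}$, and use the orthogonality $\int_{0}^{1}r_{\mathbf i}(t)r_{\mathbf j}(t)\,dt=\delta_{\mathbf i\mathbf j}$ to write
\begin{equation*}
S=\int_{0}^{1}\sum_{\mathbf i}r_{\mathbf i}(t)\,\alpha_{\mathbf i}\,\varphi_{\mathbf i}\Big(U\big(x_{i_{1}}^{(1)},\ldots,x_{i_{m}}^{(m)},v_{t}\big)\Big)\,dt,
\qquad v_{t}:=\sum_{\mathbf j}r_{\mathbf j}(t)\,x_{\mathbf j}^{(m+1)}.
\end{equation*}
Bounding the integral by the supremum over $t$, the unimodular factors $r_{\mathbf i}(t)$ and $\varphi_{\mathbf i}$ are absorbed by the modulus (nothing is ``destroyed''), and H\"older against $\Vert\alpha\Vert_{p^{\ast}}=1$ gives $S\leq\sup_{t}\big(\sum_{\mathbf i}\Vert U(x_{i_{1}}^{(1)},\ldots,x_{i_{m}}^{(m)},v_{t})\Vert^{p}\big)^{1/p}$. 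At this point the tail \emph{is} frozen: for each $t$, $v_{t}$ is a single vector with $\Vert v_{t}\Vert\leq\Vert(x_{\mathbf j}^{(m+1)})\Vert_{w,1}$, so hypothesis \eqref{hyp} applies to the fixed $m$-linear operator $U(\cdot,\ldots,\cdot,v_{t})$ and yields the bound with constant $C_{0}$ and the sharp exponent $p$ intact. The paper removes the remaining tail slots by induction on $n$, repeating exactly this one-slot argument; in your formulation the same iteration (or the generalized Rademacher functions of \cite{Ar}, if one wants all $n-m$ copies of $e_{\mathbf i}$ at once) is still required. Without some such execution of the diagonal step, the proposal does not prove the theorem.
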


\begin{proof}
The case $m=1$ is known (see \cite[Corollary 3.3]{b}). For $m\geq 2$ let us
proceed by induction on $n$. First we will show that the result holds for $%
n=m+1$. Let $N$ be a positive integer and $x_{i_{1}\cdots i_{m}}^{(m+1)}\in
E_{m+1}$. By the Hahn--Banach theorem we can choose norm one functionals $%
\varphi _{i_{1}\cdots i_{m}}$ such that
\begin{equation*}
\left\Vert U(x_{i_{1}}^{(1)},...,x_{i_{m}}^{(m)},x_{i_{1}\cdots
i_{m}}^{(m+1)})\right\Vert =\varphi _{i_{1}\cdots i_{m}}\left(
U(x_{i_{1}}^{(1)},...,x_{i_{m}}^{(m)},x_{i_{1}\cdots i_{m}}^{(m+1)})\right)
\end{equation*}%
for all $i_{1},...,i_{m}=1,\ldots ,N$.

A duality argument gives us non-negative real numbers $\alpha _{i_{1}\cdots
i_{m}}$ such that
\begin{equation*}
\sum_{i_{1},...,i_{m}=1}^{N}\alpha _{i_{1}\cdots i_{m}}^{p^{\ast }}=1,
\end{equation*}%
where $p^{\ast }$ is the conjugate number of $p$, i.e., $\frac{1}{p}+\frac{1%
}{p^{\ast }}=1$, and
\begin{equation*}
\begin{array}{l}
\displaystyle\left( \sum_{i_{1},...,i_{m}=1}^{N}\left\Vert
U(x_{i_{1}}^{(1)},...,x_{i_{m}}^{(m)},x_{i_{1}\cdots
i_{m}}^{(m+1)})\right\Vert ^{p}\right) ^{\frac{1}{p}} \\
\displaystyle=\sum_{i_{1},...,i_{m}=1}^{N}\alpha _{i_{1}\cdots
i_{m}}\left\Vert U(x_{i_{1}}^{(1)},...,x_{i_{m}}^{(m)},x_{i_{1}\cdots
i_{m}}^{(m+1)})\right\Vert \\
\displaystyle=\sum_{i_{1},...,i_{m}=1}^{N}\alpha _{i_{1}\cdots i_{m}}\varphi
_{i_{1}\cdots i_{m}}\left(
U(x_{i_{1}}^{(1)},...,x_{i_{m}}^{(m)},x_{i_{1}\cdots i_{m}}^{(m+1)})\right) .%
\end{array}%
\end{equation*}

Let $r_{j_{1}\cdots j_{m}}$ be the Rademacher functions indexed on $\mathbb{N%
}\times \cdots \times \mathbb{N}$ (the order is not important). We have
\begin{align*}
& \int_{0}^{1}\sum_{i_{1},...,i_{m}=1}^{N}r_{i_{1}\cdots i_{m}}(t)\alpha
_{i_{1}\cdots i_{m}}\varphi _{i_{1}\cdots i_{m}}\left(
U(x_{i_{1}}^{(1)},...,x_{i_{m}}^{(m)},\sum_{j_{1},...,j_{m}=1}^{N}r_{j_{1}%
\cdots j_{m}}(t)x_{j_{1}\cdots j_{m}}^{(m+1)})\right) \,dt \\
& =\sum_{i_{1},...,i_{m}=1}^{N}\sum_{j_{1},...,j_{m}=1}^{N}\alpha
_{i_{1}\cdots i_{m}}\varphi _{i_{1}\cdots i_{m}}\left(
U(x_{i_{1}}^{(1)},...,x_{i_{m}}^{(m)},x_{j_{1}\cdots j_{m}}^{(m+1)})\right)
\int_{0}^{1}r_{i_{1}\cdots i_{m}}(t)r_{j_{1}\cdots j_{m}}(t)\,dt \\
& =\sum_{i_{1},...,i_{m}=1}^{N}\alpha _{i_{1}\cdots i_{m}}\varphi
_{i_{1}\cdots i_{m}}\left(
U(x_{i_{1}}^{(1)},...,x_{i_{m}}^{(m)},x_{i_{1}\cdots i_{m}}^{(m+1)})\right)
\\
& =\left( \sum_{i_{1},...,i_{m}=1}^{N}\left\Vert
U(x_{i_{1}}^{(1)},...,x_{i_{m}}^{(m)},x_{i_{1}\cdots
i_{m}}^{(m+1)})\right\Vert ^{p}\right) ^{\frac{1}{p}}.
\end{align*}%
Hence%
\begin{align*}
& \left( \sum_{i_{1},...,i_{m}=1}^{N}\left\Vert
U(x_{i_{1}}^{(1)},...,x_{i_{m}}^{(m)},x_{i_{1}\cdots
i_{m}}^{(m+1)})\right\Vert ^{p}\right) ^{\frac{1}{p}} \\
& \leq \int_{0}^{1}\left\vert \sum_{i_{1},...,i_{m}=1}^{N}r_{i_{1}\cdots
i_{m}}(t)\alpha _{i_{1}\cdots i_{m}}\varphi _{i_{1}\cdots i_{m}}\Big(%
U(x_{i_{1}}^{(1)},...,x_{i_{m}}^{(m)},\sum_{j_{1},...,j_{m}=1}^{N}r_{j_{1}%
\cdots j_{m}}(t)x_{j_{1}\cdots j_{m}}^{(m+1)})\Big)\right\vert \,\,dt \\
& \leq \sup_{t\in \lbrack 0,1]}\left\vert
\sum_{i_{1},...,i_{m}=1}^{N}r_{i_{1}\cdots i_{m}}(t)\alpha _{i_{1}\cdots
i_{m}}\varphi _{i_{1}\cdots i_{m}}\Big(%
U(x_{i_{1}}^{(1)},...,x_{i_{m}}^{(m)},\sum_{j_{1},...,j_{m}=1}^{N}r_{j_{1}%
\cdots j_{m}}(t)x_{j_{1}\cdots j_{m}}^{(m+1)})\Big)\right\vert \\
& \leq \sup_{t\in \lbrack 0,1]}\sum_{i_{1},...,i_{m}=1}^{N}\alpha
_{i_{1}\cdots i_{m}}\Big\Vert U\Big(x_{i_{1}}^{(1)},...,x_{i_{m}}^{(m)},%
\sum_{j_{1},...,j_{m}=1}^{N}r_{j_{1}\cdots j_{m}}(t)x_{j_{1}\cdots
j_{m}}^{(m+1)}\Big)\Big\Vert \\
& \leq \left( \sum_{i_{1},...,i_{m}=1}^{N}\alpha _{i_{1}\cdots
i_{m}}^{p^{\ast }}\right) ^{\frac{1}{p^{\ast }}}\cdot \sup_{t\in \lbrack
0,1]}\left( \sum_{i_{1},...,i_{m}=1}^{N}\Big\Vert %
U(x_{i_{1}}^{(1)},...,x_{i_{m}}^{(m)},\sum_{j_{1},...,j_{m}=1}^{N}r_{j_{1}%
\cdots j_{m}}(t)x_{j_{1}\cdots j_{m}}^{(m+1)})\Big\Vert^{p}\right) ^{\frac{1%
}{p}} \\
& \leq \sup_{t\in \lbrack 0,1]}\pi _{(p;q_{1},...,q_{m})}\Big(U(\cdot
,...,\cdot ,\sum_{j_{1},...,j_{m}=1}^{N}r_{j_{1}\cdots
j_{m}}(t)x_{j_{1}\cdots j_{m}}^{(m+1)})\Big)\prod_{k=1}^{m}\left\Vert \left(
x_{i}^{(k)}\right) _{i=1}^{N}\right\Vert _{w,q_{k}}
\end{align*}%
where in the last inequality we have used \eqref{hyp}. From \eqref{hyp} it
follows from the Open Mapping Theorem that there is a constant $C>0$ such
that $\pi _{(p;q_{1},...,q_{m})}(\ \cdot \ )\leq C\Vert \cdot \Vert $. Then
\begin{equation*}
\begin{array}{l}
\displaystyle\left( \sum_{i_{1},...,i_{m}=1}^{N}\left\Vert
U(x_{i_{1}}^{(1)},...,x_{i_{m}}^{(m)},x_{i_{1}\cdots
i_{m}}^{(m+1)})\right\Vert ^{p}\right) ^{\frac{1}{p}} \\
\displaystyle\leq \sup_{t\in \lbrack 0,1]}C\left\Vert U(\cdot ,...,\cdot
,\sum_{j_{1},...,j_{m}=1}^{N}r_{j_{1}\cdots j_{m}}(t)x_{j_{1}\cdots
j_{m}}^{(m+1)})\right\Vert \prod_{k=1}^{m}\left\Vert \left(
x_{i}^{(k)}\right) _{i=1}^{N}\right\Vert _{w,q_{k}} \\
\displaystyle\leq C\Vert U\Vert \sup_{t\in \lbrack 0,1]}\left\Vert
\sum_{i_{1},...,i_{m}=1}^{N}r_{i_{1}\cdots i_{m}}(t)x_{i_{1}\cdots
i_{m}}^{(m+1)}\right\Vert \prod_{k=1}^{m}\left\Vert \left(
x_{i}^{(k)}\right) _{i=1}^{N}\right\Vert _{w,q_{k}} \\
\displaystyle\leq C\Vert U\Vert \left( \prod_{k=1}^{m}\left\Vert \left(
x_{i}^{(k)}\right) _{i=1}^{N}\right\Vert _{w,q_{k}}\right) \left\Vert \left(
x_{i_{1}\cdots i_{m}}^{(m+1)}\right) _{i_{1},...,i_{m}=1}^{N}\right\Vert
_{w,1}.%
\end{array}%
\end{equation*}

The proof is completed by an induction argument, as follows. Suppose that
the result is valid for a positive integer $n\geq m+1$. Let $N$ be a
positive integer and $E_{n+1}$ a Banach space. Let $x_{i_{1}\cdots
i_{m}}^{(n+1)}\in E_{n+1}$ and norm one functionals $\varphi _{i_{1}\cdots
i_{m}}$ such that%
\begin{equation*}
\begin{array}{l}
\displaystyle\left\Vert U(x_{i_{1}}^{(1)},...,x_{i_{m}}^{(m)},x_{i_{1}\cdots
i_{m}}^{(m+1)},\ldots ,x_{i_{1}\cdots i_{m}}^{(n+1)})\right\Vert \\
\displaystyle=\varphi _{i_{1}\cdots i_{m}}\left(
U(x_{i_{1}}^{(1)},...,x_{i_{m}}^{(m)},x_{i_{1}\cdots i_{m}}^{(m+1)},\ldots
,x_{i_{1}\cdots i_{m}}^{(n+1)})\right) ,%
\end{array}%
\end{equation*}%
for all $i_{1},...,i_{m}=1,\ldots ,N$. A duality argument gives us
non-negative real numbers $\alpha _{i_{1}\cdots i_{m}}$ such that
\begin{equation*}
\sum_{i_{1},...,i_{m}=1}^{N}\alpha _{i_{1}\cdots i_{m}}^{p^{\ast }}=1
\end{equation*}%
and%
\begin{align*}
& \left( \sum_{i_{1},...,i_{m}=1}^{N}\left\Vert
U(x_{i_{1}}^{(1)},...,x_{i_{m}}^{(m)},x_{i_{1}\cdots i_{m}}^{(m+1)},\ldots
,x_{i_{1}\cdots i_{m}}^{(n+1)})\right\Vert ^{p}\right) ^{\frac{1}{p}} \\
& =\sum_{i_{1},...,i_{m}=1}^{N}\alpha _{i_{1}\cdots i_{m}}\left\Vert
U(x_{i_{1}}^{(1)},...,x_{i_{m}}^{(m)},x_{i_{1}\cdots i_{m}}^{(m+1)},\ldots
,x_{i_{1}\cdots i_{m}}^{(n+1)})\right\Vert \\
& =\sum_{i_{1},...,i_{m}=1}^{N}\alpha _{i_{1}\cdots i_{m}}\varphi
_{i_{1}\cdots i_{m}}\left(
U(x_{i_{1}}^{(1)},...,x_{i_{m}}^{(m)},x_{i_{1}\cdots i_{m}}^{(m+1)},\ldots
,x_{i_{1}\cdots i_{m}}^{(n+1)})\right) .
\end{align*}

We also have
\begin{align*}
& \int_{0}^{1}\sum_{i_{1},...,i_{m}=1}^{N}r_{i_{1}\cdots i_{m}}(t)\alpha
_{i_{1}\cdots i_{m}} \\
& \quad \times \varphi _{i_{1}\cdots i_{m}}\left(
U(x_{i_{1}}^{(1)},...,x_{i_{m}}^{(m)},x_{i_{1}\cdots i_{m}}^{(m+1)},\ldots
,x_{i_{1}\cdots i_{m}}^{(n)},\sum_{j_{1},...,j_{m}=1}^{N}r_{j_{1}\cdots
j_{m}}(t)x_{j_{1}\cdots j_{m}}^{(n+1)})\right) \,dt \\
& =\sum_{i_{1},...,i_{m}=1}^{N}\sum_{j_{1},...,j_{m}=1}^{N}\alpha
_{i_{1}\cdots i_{m}}\varphi _{i_{1}\cdots i_{m}}\left(
U(x_{i_{1}}^{(1)},...,x_{i_{m}}^{(m)},x_{i_{1}\cdots i_{m}}^{(m+1)},\ldots
,x_{i_{1}\cdots i_{m}}^{(n)},x_{j_{1}\cdots j_{m}}^{(n+1)})\right) \\
& \quad \times \int_{0}^{1}r_{i_{1}\cdots i_{m}}(t)r_{j_{1}\cdots
j_{m}}(t)\,dt \\
& =\sum_{i_{1},...,i_{m}=1}^{N}\alpha _{i_{1}\cdots i_{m}}\varphi
_{i_{1}\cdots i_{m}}\left(
U(x_{i_{1}}^{(1)},...,x_{i_{m}}^{(m)},x_{i_{1}\cdots i_{m}}^{(m+1)},\ldots
,x_{i_{1}\cdots i_{m}}^{(n+1)})\right) \\
& =\left( \sum_{i_{1},...,i_{m}=1}^{N}\left\Vert
U(x_{i_{1}}^{(1)},...,x_{i_{m}}^{(m)},x_{i_{1}\cdots i_{m}}^{(m+1)},\ldots
,x_{i_{1}\cdots i_{m}}^{(n+1)})\right\Vert ^{p}\right) ^{\frac{1}{p}}.
\end{align*}%
Hence using the induction hypothesis
\begin{align*}
& \left( \sum_{i_{1},...,i_{m}=1}^{N}\left\Vert
U(x_{i_{1}}^{(1)},...,x_{i_{m}}^{(m)},x_{i_{1}\cdots i_{m}}^{(m+1)},\ldots
,x_{i_{1}\cdots i_{m}}^{(n+1)})\right\Vert ^{p}\right) ^{\frac{1}{p}} \\
& \leq \int_{0}^{1}\left\vert \sum_{i_{1},...,i_{m}=1}^{N}r_{i_{1}\cdots
i_{m}}(t)\alpha _{i_{1}\cdots i_{m}}\right. \\
& \quad \left. \times \varphi _{i_{1}\cdots i_{m}}\left(
U(x_{i_{1}}^{(1)},...,x_{i_{m}}^{(m)},x_{i_{1}\cdots i_{m}}^{(m+1)},\ldots
,x_{i_{1}\cdots i_{m}}^{(n)},\sum_{j_{1},...,j_{m}=1}^{N}r_{j_{1}\cdots
j_{m}}(t)x_{j_{1}\cdots j_{m}}^{(n+1)})\right) \right\vert \,dt \\
& \leq \sup_{t\in \lbrack 0,1]}\sum_{i_{1},...,i_{m}=1}^{N}\alpha
_{i_{1}\cdots i_{m}}\Big\Vert U(x_{i_{1}}^{(1)},...,x_{i_{m}}^{(m)},x_{i_{1}%
\cdots i_{m}}^{(m+1)},\ldots ,x_{i_{1}\cdots
i_{m}}^{(n)},\sum_{j_{1},...,j_{m}=1}^{N}r_{j_{1}\cdots
j_{m}}(t)x_{j_{1}\cdots j_{m}}^{(n+1)})\Big\Vert \\
& \leq \left( \sum_{i_{1},...,i_{m}=1}^{N}\alpha _{i_{1}\cdots
i_{m}}^{p^{\ast }}\right) ^{\frac{1}{p^{\ast }}}\cdot \sup_{t\in \lbrack
0,1]}\left( \sum_{i_{1},...,i_{m}=1}^{N}\Big\Vert %
U(x_{i_{1}}^{(1)},...,x_{i_{m}}^{(m)},x_{i_{1}\cdots i_{m}}^{(m+1)},\ldots
,x_{i_{1}\cdots i_{m}}^{(n)},\right. \\
& \qquad \left. \sum_{j_{1},...,j_{m}=1}^{N}r_{j_{1}\cdots
j_{m}}(t)x_{j_{1}\cdots j_{m}}^{(n+1)})\Big\Vert^{p}\right) ^{\frac{1}{p}} \\
& \leq \sup_{t\in \lbrack 0,1]}C\Big\Vert U\Big(\cdot ,...,\cdot
,\sum_{j_{1},...,j_{m}=1}^{N}r_{j_{1}\cdots j_{m}}(t)x_{j_{1}\cdots
j_{m}}^{(n+1)}\Big)\Big\Vert \\
& \qquad\times \left( \prod_{k=1}^{m}\Big\Vert\Big(x_{i}^{(k)}\Big)_{i=1}^{N}%
\Big\Vert_{w,q_{k}}\right) \left( \prod_{k=m+1}^{n}\Big\Vert\Big(%
x_{i_{1}\cdots i_{m}}^{(k)}\Big)_{i_{1},...,i_{m}=1}^{N}\Big\Vert%
_{w,1}\right) \\
& \leq C\left\Vert U\right\Vert \left( \prod_{k=1}^{m}\left\Vert \left(
x_{i}^{(k)}\right) _{i=1}^{N}\right\Vert _{w,q_{k}}\right) \left(
\prod_{k=m+1}^{n+1}\left\Vert \left( x_{i_{1}\cdots i_{m}}^{(k)}\right)
_{i_{1},...,i_{m}=1}^{N}\right\Vert _{w,1}\right) .
\end{align*}%
Now we just make $N\rightarrow \infty .$
\end{proof}

\begin{example}
If $F$ is a Banach space with cotype $2$ it is well known that $\Pi _{%
\mathrm{mult}\left( 2;2,...,2\right) }\left( ^{m}c_{0};F\right) =\mathcal{L}%
\left( ^{m}c_{0};F\right) .$ From the above theorem we conclude that%
\begin{equation*}
\begin{array}{l}
\displaystyle\left( \sum_{i_{1},...,i_{m}=1}^{\infty }\left\Vert
U(x_{i_{1}}^{(1)},...,x_{i_{m}}^{(m)},x_{i_{1}\cdots
i_{m}}^{(m+1)},...,x_{i_{1}\cdots i_{m}}^{(n)})\right\Vert ^{2}\right) ^{%
\frac{1}{2}}\vspace{0.2cm} \\
\textstyle\qquad \leq C\Vert U\Vert \prod_{k=1}^{m}\left\Vert \left(
x_{i}^{(k)}\right) _{i=1}^{\infty }\right\Vert
_{w,2}\prod_{k=m+1}^{n}\left\Vert \left( x_{i_{1}\cdots i_{m}}^{(k)}\right)
_{i_{1},...,i_{m}=1}^{\infty }\right\Vert _{w,1},%
\end{array}%
\end{equation*}%
regardless of the Banach space $E$ and regardless of the $n$-linear operator
$U:c_{0}\times \overset{m\text{ times}}{\cdots }\times c_{0}\times E\times
\cdots \times E\rightarrow F$.
\end{example}

\begin{remark}
The constant $C$ that appears in the above theorem can be chosen as the
constant from the Open Mapping Theorem used in the coincidence (\ref{hyp}).
\end{remark}

\begin{remark}
The case $F=\mathbb{K}$ and $m=1$ with $q=p=1$ recovers the Defant-Voigt
Theorem (see \cite{alencar}).
\end{remark}

\begin{remark}
Theorem \ref{iu} is in some sense optimal. In fact it was recently proved in
\cite{aaa} that the Defant-Voigt Theorem is optimal in the following sense:
every continuous $m$-linear form is absolutely $(1;1,...,1)$-summing and
this result can not be improved to $(p;1,...,1)$-summing with $p<1$.
\end{remark}

\section{Some applications}

In this section we show how the result proved in the previous section is
connected to the problem stated in the introduction of this note.

\subsection{Variations of Littlewood's $4/3$ theorem and Bohnenblust--Hille
inequality}

We begin by proving the result stated in the Introduction:

\begin{theorem}
\label{89}Let $m\geq 3$ be an integer and $\sigma _{1},\ldots ,\sigma _{m-2}$
be bijections from $\mathbb{N}\times \mathbb{N}$ to $\mathbb{N}$. Then
\begin{equation}
\left( \sum_{i,j=1}^{\infty }\left\vert U\left( e_{i},e_{j},e_{\sigma
_{1}(i,j)},\ldots ,e_{\sigma _{n-2}(i,j)}\right) \right\vert ^{\frac{4}{3}%
}\right) ^{\frac{3}{4}}\leq \sqrt{2}\Vert U\Vert  \label{estr}
\end{equation}%
for all continuous $m$-linear forms $U:c_{0}\times \cdots \times
c_{0}\rightarrow \mathbb{K}$. 
\end{theorem}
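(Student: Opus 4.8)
The plan is to read Littlewood's $4/3$ inequality as a multiple summing coincidence and then feed it into Theorem~\ref{iu}. Littlewood's inequality is precisely the statement that every continuous bilinear form $U\colon c_0\times c_0\to\mathbb{K}$ is multiple $(4/3;1,1)$-summing, i.e.
\[
\Pi_{\mathrm{mult}\left( 4/3;1,1\right) }\left( c_0,c_0;\mathbb{K}\right) =\mathcal{L}\left( c_0,c_0;\mathbb{K}\right),
\]
and the coincidence holds with $\pi_{\mathrm{mult}\left( 4/3;1,1\right)}(U)\leq \sqrt{2}\,\|U\|$ (the value $\sqrt 2$ being valid for both real and complex scalars). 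This is exactly hypothesis \eqref{hyp} of Theorem~\ref{iu} for $p=4/3$ and $q_1=q_2=1$, and by the remark following that theorem the constant produced by Theorem~\ref{iu} may be taken to be $\sqrt 2$.

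First I would invoke Theorem~\ref{iu} with its ``$m$'' equal to $2$ and its ``$n$'' equal to the integer $m\geq 3$ of the present statement; the hypothesis $n>m$ there becomes $m>2$, which is why $m\geq 3$ is assumed. Taking $E_1=\cdots=E_m=c_0$ and $F=\mathbb{K}$, Theorem~\ref{iu} yields
\[
\left( \sum_{i,j=1}^{\infty }\bigl| U(x_{i}^{(1)},x_{j}^{(2)},x_{ij}^{(3)},\ldots ,x_{ij}^{(m)})\bigr|^{4/3}\right)^{3/4}\leq \sqrt 2\,\|U\|\,\bigl\|(x_i^{(1)})_i\bigr\|_{w,1}\bigl\|(x_j^{(2)})_j\bigr\|_{w,1}\prod_{k=3}^{m}\bigl\|(x_{ij}^{(k)})_{i,j}\bigr\|_{w,1}
\]
for every $m$-linear form $U\colon c_0\times\cdots\times c_0\to\mathbb{K}$, where I have relabelled the pair of free indices $(i_1,i_2)$ as $(i,j)$.

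Next I would specialize the vectors: set $x_i^{(1)}=e_i$, $x_j^{(2)}=e_j$, and, for $k=3,\ldots,m$, the doubly-indexed family $x_{ij}^{(k)}=e_{\sigma_{k-2}(i,j)}$. Since $c_0^{\ast}=\ell_1$, one has $\|(e_l)_l\|_{w,1}=\sup_{\varphi\in B_{\ell_1}}\sum_l|\varphi_l|=1$, so the first two weak norms equal $1$. The decisive computation is that of $\|(e_{\sigma_{k-2}(i,j)})_{i,j}\|_{w,1}$: because each $\sigma_{k-2}\colon\mathbb{N}\times\mathbb{N}\to\mathbb{N}$ is a bijection, the family $(e_{\sigma_{k-2}(i,j)})_{i,j}$ is a reindexing of the canonical basis $(e_l)_l$ in which every $e_l$ appears exactly once, whence
\[
\bigl\|(e_{\sigma_{k-2}(i,j)})_{i,j}\bigr\|_{w,1}=\sup_{\varphi\in B_{\ell_1}}\sum_{i,j}\bigl|\varphi_{\sigma_{k-2}(i,j)}\bigr|=\sup_{\varphi\in B_{\ell_1}}\sum_{l}|\varphi_l|=1.
\]
With all the weak-$\ell_1$ factors equal to $1$, the right-hand side collapses to $\sqrt 2\,\|U\|$, which is exactly \eqref{estr}.

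The step I expect to require the most care is precisely this last weak-$\ell_1$ computation, where the bijectivity of the $\sigma_{k-2}$ is what guarantees that no canonical vector is repeated and therefore that each factor stays equal to $1$ (injectivity alone would already yield the bound $\leq 1$, which is all that is needed for the inequality). Apart from correctly matching the constant $\sqrt 2$ coming out of the coincidence, everything else is a direct substitution into Theorem~\ref{iu}.
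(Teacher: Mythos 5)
Your proposal is correct and follows essentially the same route as the paper's own proof: reading Littlewood's $4/3$ theorem as the coincidence $\Pi_{\mathrm{mult}(4/3;1,1)}(^{2}c_{0};\mathbb{K})=\mathcal{L}(^{2}c_{0};\mathbb{K})$, applying Theorem~\ref{iu} with its ``$m$'' equal to $2$, and substituting $x_{ij}^{(k)}=e_{\sigma_{k-2}(i,j)}$ with all weak-$\ell_1$ norms equal to $1$. Your write-up is in fact more detailed than the paper's (which compresses the weak-norm computation and the constant tracking into one line), and your observation that injectivity of the $\sigma_k$ already suffices for the bound is a correct refinement.
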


\begin{proof}
From Littlewood's $4/3$ theorem we know that $\Pi _{\mathrm{mult}\left(
4/3;1,1\right) }\left( ^{2}c_{0};\mathbb{K}\right) =\mathcal{L}\left(
^{2}c_{0};\mathbb{K}\right) $ and the constant involved is $\sqrt{2}$ (or $2/%
\sqrt{\pi }$ for complex scalars)$.$ By choosing $x_{ij}^{(k)}=e_{\sigma
_{k}(i,j)},$ since $\left\Vert \left( e_{\sigma _{k}(i,j)}\right)
_{ij=1}^{\infty }\right\Vert _{w,1}=1$ the proof is done.
\end{proof}

The same argument of the previous theorem can be used to prove the following
more general result:

\begin{theorem}
Let $n>m\geq 1$ be positive integers and $\sigma _{k}:\mathbb{N}^m \to \mathbb{N}$ be bijections for all $%
k=1,...,n-m$. Then there is a constant $L_{m}^{\mathbb{K}}\geq 1$ such that%
\begin{equation*}
\left( \sum\limits_{i_{1},...,i_{m}=1}^{\infty }\left\vert
U(e_{i_{1}},...,e_{i_{m}},e_{\sigma _{1}(i_{1},...,i_{m})},\ldots ,e_{\sigma
_{n-m}(i_{1},...,i_{m})})\right\vert ^{\frac{2m}{m+1}}\right) ^{\frac{m+1}{2m%
}}\leq L_{m}^{\mathbb{K}}\left\Vert U\right\Vert
\end{equation*}%
for every bounded $n$-linear form $U:c_{0}\times \cdots \times
c_{0}\rightarrow \mathbb{K}$. 
\end{theorem}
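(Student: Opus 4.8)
The plan is to imitate exactly the argument used in the proof of Theorem~\ref{89}, now invoking the Bohnenblust--Hille inequality \eqref{lf} in place of Littlewood's $4/3$ theorem. The Bohnenblust--Hille inequality asserts precisely that every bounded $m$-linear form on $c_0$ is multiple $\left(\tfrac{2m}{m+1};1,\ldots,1\right)$-summing with a constant $C_m$; in the language of the previous section this reads
\begin{equation*}
\Pi_{\mathrm{mult}\left(\frac{2m}{m+1};1,\ldots,1\right)}\left(^{m}c_{0};\mathbb{K}\right)=\mathcal{L}\left(^{m}c_{0};\mathbb{K}\right).
\end{equation*}
Thus the coincidence hypothesis \eqref{hyp} of Theorem~\ref{iu} is satisfied with $E_1=\cdots=E_m=c_0$, $F=\mathbb{K}$, $p=\frac{2m}{m+1}$ and $q_1=\cdots=q_m=1$.

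With this in hand, I would apply Theorem~\ref{iu} directly. It yields a constant $C>0$, independent of $n$, such that
\begin{equation*}
\left(\sum_{i_1,\ldots,i_m=1}^{\infty}\left|U(x_{i_1}^{(1)},\ldots,x_{i_m}^{(m)},x_{i_1\cdots i_m}^{(m+1)},\ldots,x_{i_1\cdots i_m}^{(n)})\right|^{\frac{2m}{m+1}}\right)^{\frac{m+1}{2m}}\leq C\Vert U\Vert\prod_{k=1}^{m}\left\Vert\left(x_i^{(k)}\right)_{i=1}^{\infty}\right\Vert_{w,1}\prod_{k=m+1}^{n}\left\Vert\left(x_{i_1\cdots i_m}^{(k)}\right)_{i_1,\ldots,i_m=1}^{\infty}\right\Vert_{w,1}
\end{equation*}
for every bounded $n$-linear form $U:c_0\times\cdots\times c_0\to\mathbb{K}$. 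Here I have taken all the weak-summability exponents $q_k$ equal to $1$, so that the first product of norms is also a $\Vert\cdot\Vert_{w,1}$ product.

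The final step is the same substitution used in Theorem~\ref{89}. For the first $m$ slots I choose $x_i^{(k)}=e_i$, and for each $k=1,\ldots,n-m$ I set $x_{i_1\cdots i_m}^{(m+k)}=e_{\sigma_k(i_1,\ldots,i_m)}$. Since every $\sigma_k$ is a bijection from $\mathbb{N}^m$ onto $\mathbb{N}$, the corresponding family $\left(e_{\sigma_k(i_1,\ldots,i_m)}\right)_{i_1,\ldots,i_m=1}^{\infty}$ is just a reindexing of the canonical basis of $c_0$, and likewise $(e_i)_{i=1}^\infty$; in both cases the weak $\ell_1$ norm equals $\sup_{\varphi\in B_{(c_0)^\ast}}\sum_i|\varphi(e_i)|=\sup_{a\in B_{\ell_1}}\sum_i|a_i|=1$. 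Hence every factor in the two products is $1$, and the inequality collapses to the desired estimate with $L_m^{\mathbb{K}}=C$ (one may take for $C$ the Open Mapping constant attached to the coincidence, as noted in the Remark following Theorem~\ref{iu}).

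There is essentially no obstacle here: the content is entirely in Theorem~\ref{iu}, and the present statement is a clean specialization. The only points demanding a line of care are verifying that the weak $\ell_1$ norm of a bijective reindexing of the canonical basis is indeed $1$ (which is immediate from the duality $(c_0)^\ast=\ell_1$ and the fact that a permutation does not change the supremum), and confirming that the Bohnenblust--Hille exponent $\frac{2m}{m+1}$ together with $q_k=1$ meets the admissibility condition $1\leq q_1,\ldots,q_m\leq p<\infty$ from the definition of multiple summing operators, which holds since $\frac{2m}{m+1}\geq 1$ for all $m\geq 1$.
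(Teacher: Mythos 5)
Your proposal is correct and is essentially identical to the paper's own proof: the paper proves this theorem by remarking that ``the same argument of the previous theorem can be used,'' i.e.\ it applies Theorem~\ref{iu} with the coincidence $\Pi_{\mathrm{mult}\left(\frac{2m}{m+1};1,\ldots,1\right)}\left(^{m}c_{0};\mathbb{K}\right)=\mathcal{L}\left(^{m}c_{0};\mathbb{K}\right)$ coming from the Bohnenblust--Hille inequality and substitutes $x_{i}^{(k)}=e_{i}$ and $x_{i_{1}\cdots i_{m}}^{(m+k)}=e_{\sigma_{k}(i_{1},\ldots,i_{m})}$, exactly as you do. Your additional verifications (the weak-$\ell_{1}$ norm of a reindexed canonical basis equals $1$, and the admissibility $1\leq q_{k}\leq p$) are details the paper leaves implicit, so nothing is missing.
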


\begin{remark}
As a matter of fact, the constants $L_{m}^{\mathbb{K}}$ can be estimated.
From the proof of Theorem \ref{iu} it is simple to see that $L_{m}^{\mathbb{K%
}}$ can be chosen as the best known constants of the Bohnenblust--Hille
inequality. So, using the estimates of \cite{bohr}, we know that
\begin{equation}
\begin{array}{llll}
L_{m}^{\mathbb{C}} & \leq & \displaystyle\prod\limits_{j=2}^{m}\Gamma \left(
2-\frac{1}{j}\right) ^{\frac{j}{2-2j}}, & \vspace{0.2cm} \\
L_{m}^{\mathbb{R}} & \leq & 2^{\frac{446381}{55440}-\frac{k}{2}}\displaystyle%
\prod\limits_{j=14}^{k}\left( \frac{\Gamma \left( \frac{3}{2}-\frac{1}{j}%
\right) }{\sqrt{\pi }}\right) ^{\frac{j}{2-2j}}, & \text{ for }m\geq 14,%
\vspace{0.2cm} \\
L_{m}^{\mathbb{R}} & \leq & \displaystyle\left( \sqrt{2}\right)
^{\sum_{j=1}^{k-1}\frac{1}{j}}, & \text{ for }2\leq m\leq 13.%
\end{array}
\label{bkecbh}
\end{equation}
\end{remark}

The above estimates can be rewritten as (see \cite{bohr})
\begin{equation*}
\begin{array}{rcl}
L_{m}^{\mathbb{C}} & < & \displaystyle m^{0.21139},\vspace{0.2cm} \\
L_{m}^{\mathbb{R}} & < & 1.3\times m^{0.36482},%
\end{array}%
\end{equation*}
The extension of the Bohnenblust--Hille inequality to $\ell _{p}$ spaces in
the place of $\ell _{\infty }$ spaces is divided in two cases: $m<p\leq 2m$
and $p\geq 2m$. The case $p\geq 2m,$ sometimes called
Hardy--Littlewood/Praciano-Pereira inequality (see \cite{hardy,pra}) states
that there exists a (optimal) constant $C_{m,p}^{\mathbb{K}}\geq 1$ such
that, for all positive integers $N $ and all $m$-linear forms $T:\ell
_{p}^{N}\times \cdots \times \ell _{p}^{N}\rightarrow \mathbb{K}$,
\begin{equation}
\left( \sum_{i_{1},\ldots ,i_{m}=1}^{N}\left\vert T(e_{i_{1}},\ldots
,e_{i_{m}})\right\vert ^{\frac{2mp}{mp+p-2m}}\right) ^{\frac{mp+p-2m}{2mp}%
}\leq C_{m,p}^{\mathbb{K}}\left\Vert T\right\Vert .  \label{i99}
\end{equation}

\section{\protect\bigskip Final remark}

When $m<p\leq 2m$ the Hardy--Littlewood inequality is also known as
Hardy--Littlewood/Dimant-Sevilla--Peris inequality (\cite{dimant2,hardy}).
It reads as follows:

\begin{theorem}[Hardy--Littlewood/Dimant--Sevilla-Peris]
\label{yytt}For $m<p\leq 2m$, there is a constant $C_{\mathbb{K},m,p}\geq 1$
such that%
\begin{equation*}
\left( \sum_{i_{1},...,i_{m}=1}^{N}\left\vert T(e_{i_{1}},\ldots
,e_{i_{m}})\right\vert ^{\frac{p}{p-m}}\right) ^{\frac{p-m}{p}}\leq C_{%
\mathbb{K},m,p}\left\Vert T\right\Vert
\end{equation*}%
for all positive integers $N$ and all $m$-linear form $T:\ell _{p}^{N}\times
\cdots \times \ell _{p}^{N}\rightarrow \mathbb{K}$. Moreover the exponent $%
\frac{p}{p-m}$ is optimal.
\end{theorem}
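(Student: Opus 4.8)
The plan is to exploit that the exponent $\tfrac{p}{p-m}$ lies, in the $(1/p,1/s)$--plane, exactly on the segment joining the trivial endpoint $(1/p,1/s)=(1/m,0)$ to the endpoint $(1/p,1/s)=(1/2m,1/2)$ at which the Hardy--Littlewood/Praciano--Pereira inequality \eqref{i99} already gives the exponent $2$; the whole range $m<p\le 2m$ is then obtained by interpolating these two endpoint estimates. The decisive structural point is that the assignment $\Phi\colon T\mapsto \big(T(e_{i_1},\dots,e_{i_m})\big)_{i_1,\dots,i_m}$ is \emph{linear} in $T$, and only the norm of the domain $\mathcal{L}(^m\ell_p^N;\mathbb{K})$ varies with $p$. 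So I would fix $N$, prove bounds uniform in $N$, and let $N\to\infty$ at the end (complexifying first for real scalars).

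First I would record the two endpoints. At $p=m$ the estimate is trivial: $\max_{i_1,\dots,i_m}|T(e_{i_1},\dots,e_{i_m})|\le \Vert T\Vert\prod_k\Vert e_{i_k}\Vert_m=\Vert T\Vert$, so $\Phi\colon \mathcal{L}(^m\ell_m^N;\mathbb K)\to\ell_\infty$ has norm $\le 1$. The substantive endpoint is $p=2m$, where \eqref{i99} reads
\[
\Big(\sum_{i_1,\dots,i_m}|T(e_{i_1},\dots,e_{i_m})|^{2}\Big)^{1/2}\le C^{\mathbb K}_{m,2m}\,\Vert T\Vert ,
\]
i.e. $\Phi\colon\mathcal{L}(^m\ell_{2m}^N;\mathbb K)\to\ell_2$ is bounded uniformly in $N$. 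With the endpoints in hand I would interpolate: on the target side $[\ell_\infty,\ell_2]_\theta=\ell_s$ with $\tfrac1s=\tfrac\theta2$ (finite dimensional, so no reflexivity issue), and on the domain side one has $[\mathcal{L}(^m\ell_m^N),\mathcal{L}(^m\ell_{2m}^N)]_\theta=\mathcal{L}(^m\ell_p^N)$ with $\tfrac1p=\tfrac{1-\theta}{m}+\tfrac{\theta}{2m}=\tfrac{2-\theta}{2m}$. Complex (Riesz--Thorin/Stein) interpolation of the linear map $\Phi$ then yields $\Phi\colon\mathcal{L}(^m\ell_p^N;\mathbb K)\to\ell_s$ bounded by $(C^{\mathbb K}_{m,2m})^{\theta}$; eliminating $\theta=2-\tfrac{2m}{p}$ gives $\tfrac1s=\tfrac\theta2=1-\tfrac mp=\tfrac{p-m}{p}$, i.e. $s=\tfrac{p}{p-m}$, with a constant $C_{\mathbb K,m,p}$ depending only on $C^{\mathbb K}_{m,2m}$ and $\theta$, not on $N$. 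Letting $N\to\infty$ finishes existence.

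I expect the main obstacle to be the interpolation bookkeeping on the domain side: the identity $[\mathcal{L}(^m\ell_m^N),\mathcal{L}(^m\ell_{2m}^N)]_\theta=\mathcal{L}(^m\ell_p^N)$ is the interpolation of a projective–tensor scale and its dual, which I would justify most safely by running Stein's theorem on an explicit analytic family of coefficient tensors rather than quoting abstract tensor interpolation. It is worth stressing \emph{why} one routes through \eqref{i99} and interpolation instead of a direct computation: any naive Khinchin argument, testing $T$ against the Rademacher vectors $\sum_i r_i(t)e_i$, pays a factor $\Vert\sum_i r_i(t)e_i\Vert_p=N^{1/p}$ per slot, and even after Parseval and the duality $(\ell_p^N)^\ast=\ell_{p^\ast}^N$ one is left with a spurious power of $N$; the dimension-free content is exactly what is packaged in the endpoint \eqref{i99}.

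Optimality of the exponent is, by contrast, elementary, and I would settle it with the diagonal form $T(x^{(1)},\dots,x^{(m)})=\sum_{i=1}^{N} x^{(1)}_i\cdots x^{(m)}_i$. Hölder with exponents $(p,\dots,p,q)$, $\tfrac1q=1-\tfrac mp$, gives $\Vert T\Vert\le N^{(p-m)/p}$, attained at $x^{(k)}=N^{-1/p}(1,\dots,1)$, so $\Vert T\Vert=N^{(p-m)/p}$; meanwhile its coefficients are supported on the diagonal and satisfy $\Vert(a_{\mathbf i})\Vert_s=N^{1/s}$. Hence any $s$ for which the inequality holds with a constant independent of $N$ must satisfy $\tfrac1s\le\tfrac{p-m}{p}$, i.e. $s\ge\tfrac{p}{p-m}$, which is the asserted optimality.
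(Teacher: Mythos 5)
The first thing to note is that the paper does not prove Theorem \ref{yytt} at all: it quotes it from \cite{dimant2,hardy}. So your attempt can only be measured against the literature proofs (Hardy--Littlewood for $m=2$, Dimant--Sevilla-Peris in general), and those proceed by Khinchin-type and mixed $\ell_1/\ell_2$-norm arguments --- precisely the route you explicitly set aside as ``naive.'' Your optimality half is correct and standard: it is the same diagonal-form computation ($\Vert T\Vert = N^{(p-m)/p}$ versus $N^{1/s}$ on the coefficient side) that the paper itself runs in the proof of Proposition \ref{90}. The problem is the existence half, and the gap sits exactly at the step you label ``interpolation bookkeeping.''

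To interpolate the linear map $\Phi\colon T\mapsto (T(e_{i_1},\dots,e_{i_m}))$ between the endpoints $p_0=m$ and $p_1=2m$, you need the inclusion $\mathcal{L}(^m\ell_p^N;\mathbb{K})\subseteq\bigl[\mathcal{L}(^m\ell_m^N;\mathbb{K}),\mathcal{L}(^m\ell_{2m}^N;\mathbb{K})\bigr]_\theta$ with norm bounds \emph{uniform in $N$}. By finite-dimensional duality this is equivalent to the uniform inclusion $\bigl[\ell_m^N\otimes_\pi\cdots\otimes_\pi\ell_m^N,\ \ell_{2m}^N\otimes_\pi\cdots\otimes_\pi\ell_{2m}^N\bigr]_\theta\subseteq\ell_p^N\otimes_\pi\cdots\otimes_\pi\ell_p^N$. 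The inclusion that interpolation theory gives for free --- by interpolating the canonical multilinear map $(x^{(1)},\dots,x^{(m)})\mapsto x^{(1)}\otimes\cdots\otimes x^{(m)}$ --- is exactly the \emph{reverse} one. Complex interpolation is not known to commute with projective tensor products (this is a classical difficulty, and the positive results of Kouba type do not cover $m$-fold products of $\ell_p$ spaces with indices in $[m,2m]$, $m\geq 2$), so the identity you rely on is not a quotable fact, and there is no evident proof of it that is not already equivalent to the theorem. Your proposed fallback, Stein's theorem on an analytic family, hits the same wall: given only $\Vert T\Vert_{\mathcal{L}(^m\ell_p^N)}$, a three-lines argument needs an analytic family through $T$ with \emph{dimension-free} endpoint bounds, and the only generic endpoint bounds available carry powers of $N$, namely $\Vert T\Vert_{\mathcal{L}(^m\ell_{2m}^N)}\leq N^{\frac{m}{p}-\frac{1}{2}}\Vert T\Vert_{\mathcal{L}(^m\ell_p^N)}$. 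Feeding these into the interpolation machinery yields
\begin{equation*}
\Bigl(\sum_{i_1,\dots,i_m=1}^{N}|T(e_{i_1},\dots,e_{i_m})|^{\frac{p}{p-m}}\Bigr)^{\frac{p-m}{p}}\leq C\,N^{\frac{(2m-p)(p-m)}{p^2}}\Vert T\Vert,
\end{equation*}
a spurious positive power of $N$ throughout the whole open range $m<p<2m$. In other words, the dimension-free content of the theorem is precisely what your argument presupposes rather than proves; to fix it you would have to abandon the abstract interpolation of the domain scale and return to the Khinchin/mixed-norm machinery of \cite{dimant2,hardy}.
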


In this case we can prove the following (optimal) result, which does not
depend on the results developed in the previous sections:

\begin{proposition}
\label{90}Let $m>n\geq 1$ be positive integers, let $m<p\leq 2m$ and $\sigma
_{k}:\mathbb{N}^n \to \mathbb{N}$ be
bijections for all $k=1,....,n$. Then there is a constant $C_{\mathbb{K}%
,m,n,p}\geq 1$ such that%
\begin{equation*}
\left( \sum\limits_{i_{1},...,i_{n}=1}^{N}\left\vert
U(e_{i_{1}},...,e_{i_{n}},e_{\sigma _{1}(i_{1},...,i_{n})},\ldots ,e_{\sigma
_{m-n}(i_{1},...,i_{n})})\right\vert ^{\frac{p}{p-m}}\right) ^{\frac{p-m}{p}%
}\leq C_{\mathbb{K},m,n,p}\left\Vert U\right\Vert
\end{equation*}%
for all positive integers $N\geq 1$ and all continuous $m$-linear form $%
U:\ell _{p}^{N}\times \cdots \times \ell _{p}^{N}\rightarrow \mathbb{K}$.
Moreover, the exponent $p/(p-m)$ is optimal.
\end{proposition}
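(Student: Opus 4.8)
The plan is to derive the upper bound directly from the Hardy--Littlewood/Dimant--Sevilla-Peris inequality (Theorem \ref{yytt}) and to establish optimality of the exponent by testing suitable extremal forms. For the upper bound, the key observation is that the $m$-tuples $(i_1,\dots,i_n,\sigma_1(i_1,\dots,i_n),\dots,\sigma_{m-n}(i_1,\dots,i_n))$ are pairwise distinct as $(i_1,\dots,i_n)$ ranges over $\{1,\dots,N\}^n$, since their first $n$ coordinates already determine $(i_1,\dots,i_n)$. Hence, regarding $U$ as a form on $\ell_p^M$ with $M$ large enough to contain every index $\sigma_k(i_1,\dots,i_n)$ occurring for $i_1,\dots,i_n\le N$, the left-hand sum is a sub-sum of the full Hardy--Littlewood sum $\sum_{j_1,\dots,j_m=1}^M|U(e_{j_1},\dots,e_{j_m})|^{p/(p-m)}$. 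Since the constant $C_{\mathbb{K},m,p}$ in Theorem \ref{yytt} does not depend on the dimension, this gives the asserted inequality with $C_{\mathbb{K},m,n,p}=C_{\mathbb{K},m,p}$, uniformly in $N$. I expect this half to be routine.

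For the optimality of the exponent I would show that no $r<p/(p-m)$ admits a dimension-free constant, and here I am free to choose convenient bijections and forms. When $n=1$ I take $\sigma_k=\mathrm{id}$ and the fully diagonal form $U(x^{(1)},\dots,x^{(m)})=\sum_{i=1}^N x^{(1)}_i\cdots x^{(m)}_i$ on $\ell_p^N$; a generalized Hölder estimate gives $\|U\|\le N^{1-m/p}$, while every diagonal entry equals $1$, so the left-hand side is $N^{1/r}$ and the ratio $N^{1/r}/N^{1-m/p}$ tends to infinity precisely when $r<p/(p-m)$. In general the target is to produce, for each $N$, an $m$-linear form $U$ whose $N^n$ diagonal entries all have modulus one (so that the left-hand side equals $N^{n/r}$) while $\|U\|\lesssim N^{n(p-m)/p}$; the ratio then blows up for every $r<p/(p-m)$, yielding optimality.

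The main obstacle is precisely this norm estimate for $n\ge 2$. The naive candidate, unit coefficients on the $N^n$ diagonal multi-indices, is too large: testing on uniform vectors shows $\|U\|\gtrsim N^{n(p-m+n-1)/p}$, essentially because the $n$ free indices $i_1,\dots,i_n$ are not injective functions of the single diagonal parameter. This only rules out $r<p/(p-m+n-1)$ and therefore misses the range up to $p/(p-m)$; one must gain the extra factor $N^{-n(n-1)/p}$. To this end I would place random signs $\varepsilon_{i_1\cdots i_n}=\pm 1$ on the diagonal coefficients and bound $\mathbb{E}\|U\|$ by a Kahane--Salem--Zygmund-type argument tailored to this sparse tensor and to the $\ell_p$-geometry, targeting the sharp estimate $\mathbb{E}\|U\|\lesssim N^{n(p-m)/p}$. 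Carrying out this probabilistic norm bound, rather than the lossy deterministic Hölder one, is the delicate step; granting it, the diagonal sum $N^{n/r}$ beats $\|U\|$ for every $r<p/(p-m)$ and optimality follows by the counting above.
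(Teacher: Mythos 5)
Your upper-bound argument is correct and coincides with the paper's: the tuples $(i_1,\dots,i_n,\sigma_1(i_1,\dots,i_n),\dots,\sigma_{m-n}(i_1,\dots,i_n))$ are pairwise distinct, so the left-hand side is a sub-sum of the full Hardy--Littlewood sum and Theorem \ref{yytt} applies with a dimension-free constant. The genuine gap is in the optimality half for $n\ge 2$: you reduce it to the unproved estimate $\mathbb{E}\Vert U_{\varepsilon}\Vert \lesssim N^{n(p-m)/p}$ for the random full-diagonal form, and this estimate is in fact \emph{false}, so the proposed Kahane--Salem--Zygmund route cannot be completed. Randomizing the signs gains nothing here because a single argument of the form can decode all of them. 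Indeed, write
\begin{equation*}
U_{\varepsilon}(x^{(1)},\dots,x^{(m)})=\sum_{i\in\{1,\dots,N\}^{n}}\varepsilon_{i}\,x_{i_{1}}^{(1)}\cdots x_{i_{n}}^{(n)}\,x_{\sigma_{1}(i)}^{(n+1)}\cdots x_{\sigma_{m-n}(i)}^{(m)}.
\end{equation*}
Since $\sigma_{1}$ is injective, the vector $y$ with $y_{\sigma_{1}(i)}=\varepsilon_{i}N^{-n/p}$ for $i\in\{1,\dots,N\}^{n}$ and $0$ elsewhere is a unit vector of $\ell_{p}$. Taking $x^{(n+1)}=y$, taking $x^{(k)}=N^{-1/p}(1,\dots,1)$ for $k\le n$, and taking $x^{(k)}$ with entries $N^{-n/p}$ on the range of $\sigma_{k-n}$ for $k\ge n+2$, every $\varepsilon_{i}$ appears squared, so
\begin{equation*}
U_{\varepsilon}(x^{(1)},\dots,x^{(m)})=\sum_{i\in\{1,\dots,N\}^{n}}\varepsilon_{i}^{2}\,N^{-\frac{n}{p}}\,N^{-\frac{n}{p}}\,N^{-\frac{n(m-n-1)}{p}}=N^{\frac{n(p-m+n-1)}{p}}.
\end{equation*}
Hence $\Vert U_{\varepsilon}\Vert\ge N^{n(p-m+n-1)/p}$ for \emph{every} realization of the signs, and your target bound fails by the factor $N^{n(n-1)/p}$. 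This is the structural difference from the Kahane--Salem--Zygmund setting: there the number of random signs vastly exceeds the dimension of each argument space, whereas here the $\varepsilon_{i}$ are in bijection with the coordinates of $x^{(n+1)}$, so no cancellation is possible.

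The repair, and this is exactly what the paper does, is to abandon the full $N^{n}$-point diagonal: $N$ unimodular entries suffice, provided the norm scales accordingly. Take
\begin{equation*}
U(x^{(1)},\dots,x^{(m)})=\sum_{i=1}^{N}x_{i}^{(1)}\cdots x_{i}^{(n)}\,x_{\sigma_{1}(i,\dots,i)}^{(n+1)}\cdots x_{\sigma_{m-n}(i,\dots,i)}^{(m)}.
\end{equation*}
H\"older's inequality (applicable since $m<p$) gives $\Vert U\Vert\le N^{(p-m)/p}$, and injectivity of the $\sigma_{k}$ shows that $U(e_{i_{1}},\dots,e_{i_{n}},e_{\sigma_{1}(i_{1},\dots,i_{n})},\dots,e_{\sigma_{m-n}(i_{1},\dots,i_{n})})$ vanishes unless $i_{1}=\dots=i_{n}$, so the left-hand side of the proposition evaluated at $U$ equals exactly $N^{1/s}$. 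Then $N^{1/s}\le C\,N^{(p-m)/p}$ for all $N$ forces $s\ge p/(p-m)$, with no probabilistic input. Your $n=1$ computation is precisely this argument; the point you missed is that it transfers verbatim to $n\ge 2$ by concentrating the coefficients on the one-parameter sub-diagonal $i_{1}=\dots=i_{n}$ rather than on all of $\{1,\dots,N\}^{n}$, trading a smaller left-hand side ($N^{1/s}$ instead of $N^{n/s}$) for a proportionally smaller norm, so that the ratio still forces the sharp exponent.
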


\begin{proof}
For the sake of simplicity we suppose $n=2$. The general case is similar.
Note that, using Theorem \ref{yytt} we have
\begin{align*}
& \left( \sum_{i,j=1}^{N}\left\vert U(e_{i},e_{j},e_{\sigma
_{1}(i,j)},\ldots ,e_{\sigma _{m-2}(i,j)})\right\vert ^{\frac{p}{p-m}%
}\right) ^{\frac{p-m}{p}} \\
& \leq \left( \sum_{i_{1},...,i_{m}=1}^{\infty }\left\vert
U(e_{i_{1}},\ldots ,e_{i_{m}})\right\vert ^{\frac{p}{p-m}}\right) ^{\frac{p-m%
}{p}}\leq C_{\mathbb{K},m,p}\left\Vert U\right\Vert .
\end{align*}%
The optimality of the exponent $\frac{p}{p-m}$ is proved next using the same
argument of the proof of the theorem of
Hardy--Littlewood/Dimant-Sevilla-Peris (see \cite{dimant2,hardy}). Consider
the $m$-linear form
\begin{equation*}
U:\ell _{p}\times \cdots \times \ell _{p}\rightarrow \mathbb{K}
\end{equation*}%
given by
\begin{equation*}
U(x^{(1)},...,x^{(m)})=\textstyle\sum%
\limits_{i=1}^{N}x_{i}^{(1)}x_{i}^{(2)}x_{\sigma _{1}(i,i)}^{(3)}\cdots
x_{\sigma _{m-2}(i,i)}^{(m)}.
\end{equation*}%
From H\"{o}lder's inequality we have
\begin{equation*}
\left\Vert U\right\Vert \leq N^{\frac{p-m}{p}}.
\end{equation*}%
If the theorem is valid for a power $s$, then%
\begin{align*}
& \left( \sum_{i=1}^{N}\left\vert U\left( e_{i},e_{i},e_{\sigma
_{1}(i,i)},\ldots ,e_{\sigma _{m-2}(i,i)}\right) \right\vert ^{s}\right) ^{%
\frac{1}{s}} \\
& =\left( \sum_{i,j=1}^{N}\left\vert U\left( e_{i},e_{j},e_{\sigma
_{1}(i,j)},\ldots ,e_{\sigma _{m-2}(i,j)}\right) \right\vert ^{s}\right) ^{%
\frac{1}{s}} \\
& \leq C_{\mathbb{K},m,p}\left\Vert U\right\Vert \leq C_{\mathbb{K},m,p}N^{%
\frac{p-m}{p}}
\end{align*}%
and thus%
\begin{equation*}
N^{\frac{1}{s}}\leq C_{\mathbb{K},m,p}N^{\frac{p-m}{p}}
\end{equation*}%
and hence%
\begin{equation*}
s\geq \frac{p}{p-m}.
\end{equation*}
\end{proof}

It is important to recall that a somewhat similar inequality due to
Zalduendo asserts that%
\begin{equation*}
\left( \sum_{i=1}^{n}\left\vert T(e_{i},\ldots ,e_{i})\right\vert ^{\frac{p}{%
p-m}}\right) ^{\frac{p-m}{p}}\leq \left\Vert T\right\Vert
\end{equation*}%
for all positive integers $n$ and all $m$-linear forms $T:\ell
_{p}^{n}\times \cdots \times \ell _{p}^{n}\rightarrow \mathbb{K}$ and the
exponent $\frac{p}{p-m}$ is optimal. Note that Zalduendo's result and
Proposition \ref{90} are slightly different.

\bigskip

\section{\protect\bigskip Appendix: a variation of the
Kahane--Salem--Zygmund and applications}

In this section we follow a method of \cite{BAYART} to prove this.
Let us denote by $\psi _{2}(x):=\exp (x^{2})-1$ for $x\geq 0$. Let $(\Omega ,%
\mathcal{A},\mathbb{P})$ be a probability measure space and let us consider
the Orlicz space $L_{\psi _{2}}=L_{\psi _{2}}(\Omega ,\mathcal{A},\mathbb{P})
$ associated to $\psi _{2}$ formed by all real-valued random variables $X$
on $(\Omega ,\mathcal{A},\mathbb{P})$ such that $\mathbb{E}\left( \psi
_{2}\left( |X|/c\right) \right) <\infty $ for some $c>0$. The associated
Orlicz norm $\Vert \cdot \Vert _{\psi _{2}}$ is given by
\begin{equation*}
\Vert X\Vert _{\psi _{2}}:=\inf \{c>0\,;\,\mathbb{E}\left( \psi _{2}\left(
|X|/c\right) \right) \leq 1\},
\end{equation*}%
and $\left( L_{\psi _{2}},\Vert \cdot \Vert _{\psi _{2}}\right) $ is a
Banach space. We shall use the following lemma, which was suggested to us by
F. Bayart.

\begin{lemma}
Let $M$ be a metric space and let $(X(\omega,x))$ a family of random
variables defined on $(\Omega,\mathcal{A},\mathbb{P})$ and indexed by $M$.
Assume that there exists $A>0$ and a finite set $F\subset M$ such that

\begin{itemize}
\item[i.] For any $x\in M$, $\Vert X(\cdot ,x)\Vert _{\psi _{2}}\leq A$;

\item[ii.] For any $x\in M$, there exists $y\in F$ such that
\begin{equation*}
\sup_{\omega\in\Omega} |X(\omega,x)-X(\omega ,y)| \leq \frac{1}{2}
\sup_{z\in M}|X(\omega,z)|.
\end{equation*}
\end{itemize}

Then for any $R>0$ with $\frac{\text{card}(F)}{\psi_2(R/A)}<1$, there exists
$\omega\in \Omega$ satisfying
\begin{equation*}
\sup_{x\in M}|X(\omega,x)|\leq 2R.
\end{equation*}
\end{lemma}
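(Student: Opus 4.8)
The plan is to bound the probability that the supremum $\sup_{x\in M}|X(\omega,x)|$ exceeds $2R$ and show it is strictly less than $1$, so that a good $\omega$ must exist. First I would control the supremum over the finite set $F$. For a single $y\in F$, condition (i) together with the definition of the Orlicz norm gives $\mathbb{E}\left(\psi_2\left(|X(\cdot,y)|/A\right)\right)\leq 1$, and by Markov's inequality applied to the nonnegative random variable $\psi_2\left(|X(\cdot,y)|/A\right)$ we get
\begin{equation*}
\mathbb{P}\left(|X(\cdot,y)|>R\right)=\mathbb{P}\left(\psi_2\left(|X(\cdot,y)|/A\right)>\psi_2(R/A)\right)\leq\frac{\mathbb{E}\left(\psi_2\left(|X(\cdot,y)|/A\right)\right)}{\psi_2(R/A)}\leq\frac{1}{\psi_2(R/A)},
\end{equation*}
using that $\psi_2$ is increasing on $[0,\infty)$. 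A union bound over the $\mathrm{card}(F)$ elements of $F$ then yields
\begin{equation*}
\mathbb{P}\left(\sup_{y\in F}|X(\cdot,y)|>R\right)\leq\frac{\mathrm{card}(F)}{\psi_2(R/A)}<1,
\end{equation*}
where the last strict inequality is exactly the hypothesis on $R$.

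The second step is the deterministic passage from the finite set $F$ to the whole index set $M$ via condition (ii). Fix $\omega\in\Omega$ and suppose $\sup_{y\in F}|X(\omega,y)|\leq R$. For any $x\in M$, choose $y\in F$ as in (ii); then
\begin{equation*}
|X(\omega,x)|\leq|X(\omega,y)|+|X(\omega,x)-X(\omega,y)|\leq R+\frac{1}{2}\sup_{z\in M}|X(\omega,z)|.
\end{equation*}
Taking the supremum over $x\in M$ on the left gives $\sup_{x\in M}|X(\omega,x)|\leq R+\frac12\sup_{z\in M}|X(\omega,z)|$, and rearranging (absorbing the half-supremum into the left-hand side) yields $\sup_{x\in M}|X(\omega,x)|\leq 2R$. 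Combining the two steps: the event $\{\sup_{y\in F}|X(\cdot,y)|\leq R\}$ has probability at least $1-\mathrm{card}(F)/\psi_2(R/A)>0$ and is contained in the event $\{\sup_{x\in M}|X(\cdot,x)|\leq 2R\}$, so the latter is nonempty and the desired $\omega$ exists.

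The main subtlety, and the step I would be most careful about, is the absorption argument in the second paragraph: it tacitly requires $\sup_{z\in M}|X(\omega,z)|<\infty$ so that one may legitimately subtract $\tfrac12\sup_{z}|X(\omega,z)|$ from both sides. Since $M$ need not be compact and the $X(\omega,\cdot)$ are only assumed measurable, I would either invoke the standing assumption that this supremum is finite (which is implicit in the intended applications, where $X(\omega,x)$ depends continuously on $x$ and $M$ is effectively a compact parameter set) or run the inequality with a finite partial supremum first and pass to the limit. Everything else is a routine combination of Markov's inequality in the Orlicz scale and a union bound, so no further obstacle is expected.
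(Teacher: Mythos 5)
Your proof is correct and follows essentially the same route as the paper's: Markov's inequality applied to $\psi_2(|X|/A)$ together with a union bound over $F$, combined with the chaining-plus-absorption step coming from condition (ii); the only difference is the (immaterial) order in which the probabilistic and deterministic steps are carried out. The finiteness subtlety you flag is equally present, and equally unaddressed, in the paper's own derivation of $\sup_{x\in M}|X(\omega,x)|\leq 2\sup_{w\in F}|X(\omega,w)|$, so your treatment is if anything slightly more careful.
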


\begin{proof}
This is exactly what is done in \cite{BAYART}, Step 2 and Step 3 of the
proof of Theorem 3.1, in an abstract context. For the sake of completeness,
we give the details. Given $x\in M$, condition $(ii)$ provides us $y\in F$
such that
\begin{equation*}
\sup_{\omega \in \Omega }|X(\omega ,x)-X(\omega ,y)|\leq \frac{1}{2}%
\sup_{z\in M}|X(\omega ,z)|.
\end{equation*}%
From
\begin{equation*}
\left\vert X(\omega ,x)\right\vert \leq \left\vert X(\omega ,x)-X(\omega
,y)\right\vert +\left\vert X(\omega ,y)\right\vert \leq \frac{1}{2}%
\sup_{z\in M}|X(\omega ,z)|+\sup_{w\in F}|X(\omega ,w)|
\end{equation*}%
we get that, for any $\omega \in \Omega $,
\begin{equation}
\sup_{x\in M}|X(\omega ,x)|\leq 2\sup_{w\in F}|X(\omega ,w)|.
\label{prob_lemma}
\end{equation}%
Let us fix $R>0$. As in the Step (3) of \cite[Theorem 3.1]{BAYART} we have
\begin{equation*}
\mathbb{P}\left( \left\{ \omega \in \Omega \,;\,|X(\omega ,x)|>R\right\}
\right) =\mathbb{P}\left( \left\{ \omega \in \Omega \,;\,\psi _{2}\left(
\frac{|X(\omega ,x)|}{A}\right) >\psi _{2}\left( \frac{R}{A}\right) \right\}
\right)
\end{equation*}%
The Markov inequality leads us to
\begin{equation*}
\mathbb{P}\left( \left\{ \omega \in \Omega \,;\,|X(\omega ,x)|>R\right\}
\right) \leq \frac{\mathbb{E}\left( \psi _{2}\left( |X(\omega ,x)|/A\right)
\right) }{\psi _{2}(R/A)}.
\end{equation*}%
Condition $(i)$ provides $\Vert X(\cdot ,x)\Vert _{\psi _{2}}\leq A$, thus
the definition of $\Vert \cdot \Vert _{\psi _{2}}$ assures that $\mathbb{E}%
\left( \psi _{2}\left( |X(\omega ,x)|/A\right) \right) \leq 1$.
Consequently, we get that for any $\omega \in \Omega $,
\begin{equation*}
\mathbb{P}\left( \left\{ \omega \in \Omega \,;\,|X(\omega ,x)|>R\right\}
\right) \leq \frac{1}{\psi _{2}(R/A)}.
\end{equation*}%
Since $F\subset M$ is finite,
\begin{equation*}
\mathbb{P}\left( \left\{ \omega \in \Omega \,;\,\sup_{w\in F}|X(\omega
,w)|>R\right\} \right) \leq \frac{card\,F}{\psi _{2}(R/A)}.
\end{equation*}%
Combining this with \eqref{prob_lemma} we get
\begin{equation*}
\mathbb{P}\left( \left\{ \omega \in \Omega \,;\,\sup_{x\in M}|X(\omega
,x)|>2R\right\} \right) \leq \frac{card\,F}{\psi _{2}(R/A)}.
\end{equation*}%
Thus, if we take $R>0$ such that $\displaystyle\frac{\text{card}(F)}{\psi
_{2}(R/A)}<1$, then
\begin{equation*}
\mathbb{P}\left( \left\{ \omega \in \Omega \,;\,\sup_{x\in M}|X(\omega
,x)|\leq 2R\right\} \right) >0.
\end{equation*}%
Therefore, there exists $\omega \in \Omega $ satisfying
\begin{equation*}
\sup_{x\in M}|X(\omega ,x)|\leq 2R.
\end{equation*}
\end{proof}

The previous approach can be applied in the following situation: let $N\geq 1$ and let $(\varepsilon _{i})_{i\in \{1,\dots ,N\}^{k}}$ be a sequence of independent Bernoulli variables defined on the same probability space $(\Omega ,\mathcal{A}, \mathbb{P})$. Let $M$ be the unit ball of $(\ell _{\infty }^{N})^{n}$ (endowed with the sup norm). For $x=(x^{(1)},\dots ,x^{(n)})$ in $M$ we define for positive integers $n_{1}+\dots +n_{k}=n$ and $j_{l}=n_{1}+\cdots +n_{l},\,l=1,\dots ,k$ 

\begin{equation*}
X(\omega ,x)=\sum_{i\in \{1,\dots ,N\}^{k}}\varepsilon _{i}(\omega
)x_{i_{1}}^{(1)}\cdots x_{i_{1}}^{(j_{1})}x_{i_{2}}^{(j_{1}+1)}\cdots
x_{i_{2}}^{(j_{2})}\cdots x_{i_{k}}^{(j_{k-1}+1)}\cdots x_{i_{k}}^{(j_{k})}
\end{equation*}%
For a fixed value of $x$, the $L^{2}$-norm of this random process can be
majorized, using the Khinchin inequality:
\begin{align*}
\Vert X(\cdot ,x)\Vert _{2}& =\left( \int\limits_{\Omega }\left\vert
X(w,x)\right\vert ^{2}d\mathbb{P}\right) ^{1/2} \\
& =\left( \int\limits_{\Omega }\left\vert \sum_{i\in \{1,\dots
,N\}^{k}}\varepsilon _{i}(\omega )x_{i_{1}}^{(1)}\cdots
x_{i_{1}}^{(j_{1})}x_{i_{2}}^{(j_{1}+1)}\cdots x_{i_{2}}^{(j_{2})}\cdots
x_{i_{k}}^{(j_{k-1}+1)}\cdots x_{i_{k}}^{(j_{k})}\right\vert ^{2}d\mathbb{P}%
\right) ^{1/2} \\
& \leq \left( \sum_{i\in \{1,\dots ,N\}^{k}}\left\vert x_{i_{1}}^{(1)}\cdots
x_{i_{1}}^{(j_{1})}x_{i_{2}}^{(j_{1}+1)}\cdots x_{i_{2}}^{(j_{2})}\cdots
x_{i_{k}}^{(j_{k-1}+1)}\cdots x_{i_{k}}^{(j_{k})}\right\vert ^{2}\right)
^{1/2} \\
& \leq \Vert \left( x_{i_{1}}^{(j_{1})}\right) _{i_{1}=1}^{N}\Vert
_{2}\cdots \Vert \left( x_{i_{k}}^{(j_{k})}\right) _{i_{k}=1}^{N}\Vert _{2}
\\
& \leq N^{k/2}.
\end{align*}%
Since the $\psi _{2}$-norm of a Rademacher process is dominated by its $L^{2}
$-norm, we get
\begin{equation*}
\Vert X(\cdot ,x)\Vert _{\psi _{2}}\leq CN^{k/2}:=A
\end{equation*}%
for some absolute constant $C>0$.

Now, let $\delta >0$. For a fixed value of $\omega $, for any $x,y$ in $M$
with $\Vert x-y\Vert <\delta $, the multilinearity of $X(\omega ,\cdot )$
ensures that
\begin{equation*}
|X(\omega ,x)-X(\omega ,y)|\leq n\delta \sup_{z\in M}|X(\omega ,z)|.
\end{equation*}%
We set again $\delta =\frac{1}{2n}$ and so%
\begin{equation*}
|X(\omega ,x)-X(\omega ,y)|\leq \frac{1}{2}\sup_{z\in M}|X(\omega ,z)|.
\end{equation*}%
Repeating the previous argument and we observe that there exists a $\delta $%
-net $F$ of $M$ with cardinal less than $\left( 1+\frac{2}{\delta }\right)
^{2nN}=(1+4n)^{2nN}$ (the product $nN$ is the dimension of $(\ell _{\infty
}^{N})^{n}$). Setting $R=\lambda N^{(k+1)/2}$ for some large $\lambda $ (not
depending on $N$ but eventually depending on $n$), we obtain

\begin{equation*}
\frac{\text{card}(F)}{\psi _{2}(R/A)}=\frac{(1+4n)^{2nN}}{e^{\left( \frac{%
\lambda N^{(k+1)/2}}{CN^{k/2}}\right) ^{2}}-1}=\frac{(1+4n)^{2nN}}{e^{\left(
\frac{\lambda ^{2}N}{C^{2}}\right) }-1}<1
\end{equation*}%
and from the lemma there exists $\omega _{0}\in \Omega $ such that, for any $%
x\in M$,
\begin{equation*}
|X(\omega _{0},x)|\leq 2R=2\lambda N^{\frac{k+1}{2}},
\end{equation*}%
i.e.,%
\begin{equation*}
\left\Vert X(\omega _{0},\cdot )\right\Vert \leq 2\lambda N^{\frac{k+1}{2}}.
\end{equation*}%
Now consider an $n$-linear operator $U:c_{0}\times \cdots \times
c_{0}\rightarrow \mathbb{K}$ and if%
\begin{equation*}
\left( \sum\limits_{i_{1},\dots ,i_{m}=1}^{N}\left\vert U\left(
e_{i_{1}}^{n_{1}},\dots ,e_{i_{k}}^{n_{k}}\right) \right\vert ^{r}\right) ^{%
\frac{1}{r}}\leq C\left\Vert U\right\Vert
\end{equation*}%
for all $U$ and all positive integers $N$, where $\left(
e_{i_{1}}^{n_{1}},\dots ,e_{i_{k}}^{n_{k}}\right) $ means $(e_{i_{1}},%
\overset{\text{{\tiny $n_{1}$ times}}}{\dots },e_{i_{1}},\dots ,e_{i_{k}},%
\overset{\text{{\tiny $n_{k}$ times}}}{\dots },e_{i_{k}})$, we have
\begin{align*}
N^{\frac{k}{r}}& =\left( \sum_{i\in \{1,\dots ,N\}^{k}}\left\vert
\varepsilon _{i}(\omega )x_{i_{1}}^{(1)}\cdots
x_{i_{1}}^{(j_{1})}x_{i_{2}}^{(j_{1}+1)}\cdots x_{i_{2}}^{(j_{2})}\cdots
x_{i_{k}}^{(j_{k-1}+1)}\cdots x_{i_{k}}^{(j_{k})}\right\vert ^{r}\right) ^{%
\frac{1}{r}} \\
& \leq C\left\Vert X(\omega _{0},x)\right\Vert  \\
& \leq 2C\lambda N^{\frac{k+1}{2}}.
\end{align*}%
Making $N\rightarrow \infty $ we conclude that $r\geq 2k/(k+1)$. This result
provides the optimality, for instance, of \cite[Corollary 2.5]{uni}. We also
recall that the case $k=n$ recovers the classical
Kahane--Salem--Zygmund--Inequality.

\bigskip

\bigskip

\bigskip

\end{document}